\newcommand{\species}[1]{\mathrm{#1}}
\newcommand{\coloneqq}{\mathrel{\mathop:}=}
\renewcommand{\d}{\mathrm{d}}
\newcommand{\f}{\mathrm{f}}
\newcommand{\D}{\mathrm{D}}
\newcommand{\T}{\mathrm{T}}
\newcommand{\RR}{\mathbb{R}}
\newcommand{\AAA}{\mathcal{A}}
\newcommand{\CCC}{\mathcal{C}}
\newcommand{\III}{\mathcal{I}}
\newcommand{\LLL}{\mathcal{L}}
\newcommand{\lin}{\mathrm{lin}}
\title{A continuation method for the efficient solution of parametric
optimization problems in kinetic model reduction\thanks{This work was supported
by the German Research Foundation (DFG) through the Collaborative Research
Center (SFB) 568 and by the Baden-W\"urttemberg Stiftung via project HPC~11.}}
\author{Dirk Lebiedz\footnotemark[2]
   \and Jochen Siehr\footnotemark[2] \footnotemark[3]}
\begin{document} 
\maketitle

\renewcommand{\thefootnote}{\fnsymbol{footnote}}
\footnotetext[2]{Institute for Numerical Mathematics, University of Ulm,
  Helmholtzstra\ss{}e 20, 89081 Ulm, Germany
  (\texttt{dirk.lebiedz@uni-ulm.de}, \texttt{jochen.siehr@uni-ulm.de}).}
\footnotetext[3]{Interdisciplinary Center for Scientific Computing (IWR),
  University of Heidelberg, Im Neuenheimer Feld 368, 69120 Heidelberg,
  Germany.}
\renewcommand{\thefootnote}{\arabic{footnote}}

\begin{abstract} 
Model reduction methods often aim at an identification of slow invariant
manifolds in the state space of dynamical systems modeled by
ordinary differential equations. We present a predictor corrector method
for a fast solution of an optimization problem the solution of which is
supposed to approximate points on slow invariant manifolds. The corrector
method is either an interior point method or a generalized Gauss--Newton
method. The predictor is an Euler prediction based on the parameter
sensitivities of the optimization problem. The benefit of a step size
strategy in the predictor corrector scheme is shown for an example.
\end{abstract}

\begin{keywords}
 nonlinear optimization, continuation, Euler prediction,
 slow invariant manifold, model reduction
\end{keywords}

\begin{AMS}
  37N40, 90C90, 80A30, 92E20
\end{AMS}

\pagestyle{myheadings}
\thispagestyle{plain}
\markboth{JOCHEN~SIEHR AND DIRK~LEBIEDZ}
         {A CONTINUATION METHOD IN MODEL REDUCTION}

\section{Introduction} \label{s:intro}
Models including multiple time scales arise in many applications as e.g.\
combustion chemistry and biochemistry. The models under consideration are
described by a system of ordinary differential equations (ODE). Model reduction
methods for stiff multiple time scale ODE aim at an identification of the slow
directions in the phase space of the system. After a short relaxation phase, the
dynamics of the ODE model are mainly determined by the slow modes.

The slow dynamics are often described by so-called slow invariant manifolds
(SIM). In the phase space of the dynamical system under consideration,
trajectories bundle on those SIM being attractors of nearby trajectories. These
are hierarchically ordered with decreasing dimension towards stable equilibrium,
and the dimension corresponds to the set of slow (large) time scales. Fenichel
analyzes slow manifolds in context of singular perturbed systems of ODE in a
series of articles \cite{Fenichel1972,Fenichel1974,Fenichel1977,Fenichel1979}.
It is shown there that for a sufficiently large separation of time scales
(a sufficiently small singular perturbation parameter) there exists a mapping
from a compact subset of the subspace of slow variables into the subspace of
fast variables. The graph of this mapping is the slow manifold.

Many model reduction methods identify or approximate such SIM. The quasi steady
state approximation identifies the slow manifold of singular perturbed systems
of ODE by setting the singular
perturbation parameter to zero \cite{Bodenstein1907,Chapman1913,Segel1989}.
The dynamics are only described by the differential algebraic equation that
consists of the ODE for the slow variables and the implicit function relating
the fast variables to the slow ones and defining the slow manifold. The ILDM
method identifies a local SIM approximation based on an eigenvalue decomposition
of the Jacobian of the right hand side of the ODE \cite{Maas1992}. The CSP
method identifies a local basis where fast and slow dynamics in the phase space
are separated \cite{Lam1994}. Many more methods for model reduction based on an
analysis of a separation of time scales exist, see e.g.\ the review
\cite{Gorban2004}.

In this article, we discuss the development of an efficient application of a
model reduction method based on optimization of trajectories to models including
realistic detailed chemical combustion mechanisms. In Section~\ref{s:opt_more},
we review the optimization problem for model reduction. The numerical
optimization method to solve this optimization problem is explained in
Section~\ref{s:opt}. A continuation method including the optimization method as
corrector is introduced in Section~\ref{s:continuation}. Existence of a homotopy
path is regarded as well as a step size strategy for efficient progress. In
Section~\ref{s:results}, results of an application to example problems are
shown. The article is summarized in Section~\ref{s:conclusion} and an outlook
is given.

\section{Optimization based model reduction} \label{s:opt_more}
An approach for model reduction based on optimization of trajectories is raised
in \cite{Lebiedz2004c}. The idea is to identify certain trajectories along which
a criterion takes its minimum value that represents a mathematical
characterization of slowness and attraction of nearby trajectories. This
relaxation criterion is minimized subject to the constraints of the dynamics,
the fixed parameters for parametrization of the SIM, and eventual conservation
laws.

\subsection{Optimization problem}
In \cite{Lebiedz2011a}, it is shown for two specific models that the method
identifies asymptotically the correct SIM. Here we stick near to the
formulation chosen there. We denote the ODE that describe the dynamics of the
kinetic system with
\begin{equation} \label{eq:dyn}
 \D z(t) = S(z(t)),
\end{equation}
where $\D$ is the derivative of state vector $z(t)$ at time $t$ w.r.t.\ $t$. The
function $S:\D\rightarrow\RR^n$ is assumed sufficiently smooth in an open domain
$D\in\RR^n$. Conservation relations are valid for many models. Typically,
these are restrictions to an initial value $z(t_0)$ for an initial value problem
with the dynamics~(\ref{eq:dyn}) and represent the conservation of the mass of
chemical elements in the system.

A subset of state variables is usually chosen for parametrization of the SIM
approximation. In our formulation, these variables
$z_{j}(t_*)$, $j \in \III_{\text{pv}}$, which are
called \emph{reaction progress variables} or \emph{represented species}, are
fixed at $t_*$ to a certain value $z_j^{t_*},$ $j \in \III_{\text{pv}}$ with
$|\III_{\text{pv}}|<n$. The aim of a model reduction method that allows for
\emph{species reconstruction} is the computation of the corresponding
unrepresented variables $z_{j}(t_*)$, $j \notin \III_{\text{pv}}$, at $t_*$
leading to a local representation of the SIM.

A general formulation of the optimization problem (similar to
\cite{Lebiedz2011a}) to compute an approximation of a SIM  is given by
\begin{subequations}\label{eq:gen_op}\label{eq:iop}
 \begin{equation}\label{eq:gen_op:of}
  \min_{z}\ \Psi(z)
 \end{equation}
 subject to
  \begin{align}
   \D z(t) &= S(z(t)) \label{eq:gen_op:dyn} \\
         0 &= \bar{C}(z(t)) \label{eq:gen_op:cons}\\
         0 &= z_j(t_*) - z_j^{t_*},\quad j \in \III_{\text{pv}} \label{eq:gen_op:pv}\\
         0 &\leqslant z(t) \label{eq:gen_op:pos}
  \end{align}
 and
 \begin{align}
  t   &\in [t_0,t_\f] \\
  t_* &\in [t_0,t_\f] \quad \text{\upshape (fixed)}.
  \end{align}
\end{subequations}
This means, a trajectory (piece) $z$ has to be identified on an interval
$[t_0,t_\f]$ such that the trajectory obeys the ODE
(\ref{eq:gen_op:dyn}) and necessary conservation relations
(\ref{eq:gen_op:cons}). The reaction progress variables are fixed to the desired
values $z_{j}^{t_*}$, $j \in \III_{\text{pv}}$ at a point in time $t_*$ in the
interval $[t_0,t_\f]$ in Eq.~(\ref{eq:gen_op:pv}). In some applications,
positivity of the state variables has to be demanded explicitly to guarantee
physical feasibility in the iterative solution algorithm for problem
(\ref{eq:gen_op}) as it is done in (\ref{eq:gen_op:pos}).

\subsubsection{Objective function}
Various suggestions for a suitable objective functional $\Psi$ have been made,
especially in \cite{Lebiedz2010}. In \cite{Lebiedz2011a}, the objective function
\begin{equation}\label{eq:of}
 \Psi(z) \coloneqq \int_{t_0}^{t_{\textrm f}}  \Phi\left(z(t)\right) \; \d t
\end{equation}
with the integrand
\begin{equation*}
 \Phi(z(t)) = \| J_{S}(z(t))\;S(z(t)) \|_2^2
\end{equation*}
in the Lagrange term is used, where $J_{S}$ denotes the Jacobian of $S$.

This might be motivated by the estimate
\begin{equation*}
 \| J_{S}\;S \|_2 \leqslant \| J_{S}\|_2\; \| S \|_2,
\end{equation*}
where a small spectral norm $\| J_{S}\|_2$ relates to the attraction property of
the SIM and a small $\| S \|_2$ relates to slowness.

\subsubsection{Local formulation}
The approximation of points on the SIM computed as solution of optimization
problem (\ref{eq:gen_op}) is often used e.g.\ in computational fluid dynamics
(CFD) and other applications. In this context, a large number of approximation
points of the SIM for different values of the reaction progress variables is
needed. This means, problem (\ref{eq:gen_op}) has to be solved repeatedly for a
range of values of $z_j^{t_*}$, $j \in \III_{\text{pv}}$. This is generally very
time consuming.

In this case, a local formulation (in time) can be used. It is given by
(cf.\ \cite{Girimaji1998})
\begin{subequations}\label{eq:local_op}
 \begin{equation}
  \min_{z(t_{*}),T(t_{*})} \; \| J_{S}(z(t_*))\;S(z(t_*)) \|_2^2
 \end{equation}
 \upshape{subject to}
 \begin{align}
  0 &= C(z(t_{*})) \\
  0 &= z_j(t_{*}) - z_j^{t_{*}},\quad j \in \III_{\text{pv}} \\
  0 &\leqslant z(t_{*}) \label{eq:local_op:pos}.
 \end{align}
\end{subequations}

In the case of the objective function $\Psi$ in (\ref{eq:of}), optimization problem
(\ref{eq:gen_op}) is semi-infinite. A discretization method such as a collocation
of orthogonal polynomials or a shooting approach has to be applied to project it
into a finite nonlinear programming (NLP) problem. If
(\ref{eq:local_op}) is used, the optimization problem is a finite dimensional
NLP problem, and the solution of the system dynamics (\ref{eq:gen_op:dyn}) is
dispensable. This problem can be solved directly with standard NLP software
as sequential quadratic programming \cite{Powell1978} or interior point
\cite{Forsgren2002} methods.

\subsection{Parametric optimization} \label{s:param_opt}
In the optimization problem which is solved for model reduction purposes, there
is a parameter dependence in terms of the fixation of the reaction progress
variables. Therefore, we consider parametric finite dimensional NLP problems in
the following. The standard problem is given as
\begin{subequations} \label{eq:fd_op_param}
 \begin{equation}
  \min_{x\in\RR^n}  f(x,r)
 \end{equation}
 subject to
 \begin{align}
    g(x,r) &= 0        \\
    h(x,r) &\geqslant 0, \label{eq:fd_op_param:ineq}
 \end{align}
\end{subequations}
where the objective function $f: D\times\tilde{D} \rightarrow \RR$,
the equality constraint function $g:D\times\tilde{D} \rightarrow \RR^{n_2}$,
and the inequality constraint function
$h:D\times\tilde{D} \rightarrow \RR^{n_3}$ depend on the parameter vector
$r \in \tilde{D}$ with both $D \subset \RR^n$, $\tilde{D} \subset \RR^{n_r}$ open.
The Lagrangian function can be written as
\begin{equation*}
 \LLL(x,\lambda,\mu,r) \coloneqq f(x,r) - \lambda^\T g(x,r) - \mu^\T h(x,r).
\end{equation*}

First order sensitivity results are given by the following theorem of Fiacco
\cite{Fiacco1976}. These results are used in context of real-time
optimization, see e.g.\ \cite{Bueskens2001}, and nonlinear model predictive
control, e.g.\ in \cite{Ferreau2008,Zavala2009}, for embedding of neighboring
solutions of a function of a parameter $r$.

\begin{theorem}[Second-order sufficient conditions \cite{Nocedal2006}]\label{thm:ssc}
 Let $x^*$ be a feasible point of (\ref{eq:fd_op_param}) for which the KKT
 conditions are satisfied with Lagrange multipliers $(\lambda^*,\mu^*)$. Suppose
 further that
 \begin{equation*}
  v^\T \nabla_{\!xx\,}^2 \LLL(x^*, \lambda^*, \mu^*,r) v > 0 \quad \forall v \in \CCC(x^*, \lambda^*, \mu^*),\ v\neq0,
 \end{equation*}
 where $\CCC$ is the \emph{critical cone}.
 Then $x^*$ is a strict local minimizer for (\ref{eq:fd_op_param}).
\end{theorem}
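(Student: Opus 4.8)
The plan is to argue by contradiction, extracting a limiting feasible direction that is forced into the critical cone and then contradicting the curvature hypothesis through a second-order expansion of the Lagrangian. Fix the parameter $r$ throughout. Suppose $x^*$ were not a strict local minimizer of~(\ref{eq:fd_op_param}); then there would exist a sequence of feasible points $z_k \to x^*$ with $z_k \neq x^*$ and $f(z_k,r) \leqslant f(x^*,r)$. Setting $t_k \coloneqq \enorm{z_k - x^*} \to 0$ and $d_k \coloneqq (z_k - x^*)/t_k$, I would pass to a subsequence along which $d_k \to d$ with $\enorm{d} = 1$, using compactness of the unit sphere.

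First I would show that $d \in \CCC(x^*,\lambda^*,\mu^*)$. Taylor-expanding each constraint along $z_k = x^* + t_k d_k$, dividing by $t_k$, and letting $k \to \infty$ gives $\nabla_{\!x} g_i(x^*,r)^\T d = 0$ for the equality constraints and $\nabla_{\!x} h_j(x^*,r)^\T d \geqslant 0$ for every active inequality constraint (those with $h_j(x^*,r)=0$); the same expansion of the objective together with $f(z_k,r)\leqslant f(x^*,r)$ yields $\nabla_{\!x} f(x^*,r)^\T d \leqslant 0$. The delicate point, and what I expect to be the main obstacle, is upgrading the inequality $\nabla_{\!x} h_j(x^*,r)^\T d \geqslant 0$ to an equality on the strongly active set $\{j : \mu_j^* > 0\}$, which is exactly what critical-cone membership requires. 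To obtain this I would substitute the KKT stationarity relation $\nabla_{\!x} f = \sum_i \lambda_i^* \nabla_{\!x} g_i + \sum_j \mu_j^* \nabla_{\!x} h_j$ into $\nabla_{\!x} f(x^*,r)^\T d$: the equality-constraint terms vanish, complementarity kills the inactive inequalities, and each surviving term $\mu_j^*\,\nabla_{\!x} h_j(x^*,r)^\T d$ is nonnegative, so $\nabla_{\!x} f(x^*,r)^\T d \geqslant 0$. Combined with $\nabla_{\!x} f(x^*,r)^\T d \leqslant 0$ this forces every such term to vanish, hence $\nabla_{\!x} h_j(x^*,r)^\T d = 0$ whenever $\mu_j^* > 0$, establishing $d \in \CCC$.

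The final step is a second-order argument. I would expand the Lagrangian $\LLL(\cdot,\lambda^*,\mu^*,r)$ about $x^*$; since $\nabla_{\!x} \LLL(x^*,\lambda^*,\mu^*,r)=0$ by KKT, the linear term drops and
\[
 \LLL(z_k,\lambda^*,\mu^*,r) = \LLL(x^*,\lambda^*,\mu^*,r) + \frac{1}{2}\,t_k^2\, d_k^\T \nabla_{\!xx\,}^2\LLL(x^*,\lambda^*,\mu^*,r)\,d_k + o(t_k^2).
\]
Feasibility of $z_k$ (so $g(z_k,r)=0$ and $h(z_k,r)\geqslant 0$) together with $\mu^*\geqslant 0$ gives $\LLL(z_k,\lambda^*,\mu^*,r) \leqslant f(z_k,r)$, while complementarity gives $\LLL(x^*,\lambda^*,\mu^*,r) = f(x^*,r)$; chaining these with $f(z_k,r)\leqslant f(x^*,r)$ yields $\LLL(z_k,\lambda^*,\mu^*,r) - \LLL(x^*,\lambda^*,\mu^*,r) \leqslant 0$. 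Dividing the expansion by $t_k^2/2$ and letting $k\to\infty$ then gives $d^\T \nabla_{\!xx\,}^2 \LLL(x^*,\lambda^*,\mu^*,r)\, d \leqslant 0$. Since $d \in \CCC$ with $d\neq 0$, this contradicts the second-order sufficient hypothesis, and the contradiction proves that $x^*$ is a strict local minimizer.
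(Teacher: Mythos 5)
Your proof is correct: the contradiction argument via a normalized sequence of feasible points, the upgrade to critical-cone membership using KKT stationarity and complementarity, and the second-order expansion of the Lagrangian chained with $\LLL(z_k,\lambda^*,\mu^*,r)\leqslant f(z_k,r)\leqslant f(x^*,r)=\LLL(x^*,\lambda^*,\mu^*,r)$ are all sound. The paper gives no proof of its own---it simply cites Fletcher and Nocedal--Wright---and your argument is precisely the standard textbook proof (Theorem~12.6 in \cite{Nocedal2006}) that this citation refers to, so there is nothing to compare beyond noting the agreement.
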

\begin{proof}See e.g.\ \cite{Fletcher1981,Nocedal2006}.
\end{proof}

\begin{theorem}[Parameter sensitivity \cite{Fiacco1976}]\label{thm:fiacco-sens}
 Let the functions $f$, $g$, and $h$ in problem (\ref{eq:fd_op_param}) be twice
 continuously differentiable in a neighborhood of $(x^*,0)$. Let the second
 order sufficient conditions hold (see Theorem~\ref{thm:ssc}) for a local
 minimum of (\ref{eq:fd_op_param})
 at $x^*$ with $r=0$ and Lagrange multipliers $\lambda^*$, $\mu^*$. Furthermore,
 let linear independence constraint qualification (LICQ) be valid in $(x^*,0)$
 and strict complementary slackness, i.e.\ $\mu_i^*>0$ if
 $h_i(x^*,0)=0$, $i=1,\dots,n_3$. Then the following holds:
 \begin{enumerate}[\rm (i)]
  \item The point $x^*$ is a isolated local minimizer of (\ref{eq:fd_op_param})
        with $r=0$, and the associated Lagrange multipliers $\lambda^*$, $\mu^*$
        are unique.
  \item For $r$ in a neighborhood of $0$, there exists a unique once
        continuously differentiable function $(x(r),\lambda(r),\mu(r))$
        satisfying the second order sufficient conditions for a local minimum of
        (\ref{eq:fd_op_param}) such that
        \begin{equation*}
         (x(0),\lambda(0),\mu(0))=(x^*,\lambda^*,\mu^*),
        \end{equation*}
        and $x(r)$ is a isolated local minimizer of (\ref{eq:fd_op_param}) with
        Lagrange multipliers $\lambda(r)$ and $\mu(r)$.
  \item Strict complementarity with respect to $\mu(r)$ and LICQ hold at $x(r)$
        for $r$ near $0$.
 \end{enumerate}
\end{theorem}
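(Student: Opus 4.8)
The plan is to reduce all three assertions to a single application of the implicit function theorem (IFT) to the KKT system, treating the active set as fixed by virtue of strict complementarity. First I would partition the inequality constraints at $(x^*,0)$ into the active set $\AAA \coloneqq \{i : h_i(x^*,0)=0\}$ and its complement. Strict complementarity gives $\mu_i^*>0$ for $i\in\AAA$ and, by definition of $\AAA$, gives $\mu_i^*=0$ together with $h_i(x^*,0)>0$ for $i\notin\AAA$. I would then assemble the active KKT conditions into the map
\begin{equation*}
 F(x,\lambda,\mu_{\AAA},r) = \begin{pmatrix} \nabla_{\!x} \LLL(x,\lambda,\mu,r) \\ g(x,r) \\ \bigl(h_i(x,r)\bigr)_{i\in\AAA} \end{pmatrix},
\end{equation*}
with $\mu_i\equiv 0$ frozen for $i\notin\AAA$, so that $F=0$ at the base point $(x^*,\lambda^*,\mu_{\AAA}^*,0)$ encodes exactly the KKT system. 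The twice-differentiability hypothesis makes $F$ continuously differentiable, which is what IFT requires.

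The key step, and the one I expect to be the main obstacle, is to show that the Jacobian of $F$ with respect to the primal--dual variables $(x,\lambda,\mu_{\AAA})$ is nonsingular at the base point. Writing it in block form as
\begin{equation*}
 M = \begin{pmatrix} \nabla^2_{\!xx} \LLL & -A^\T \\ A & 0 \end{pmatrix},
\end{equation*}
where the rows of $A$ collect the gradients $\nabla_{\!x} g_j$ and the active gradients $\nabla_{\!x} h_i$, $i\in\AAA$, I would invoke LICQ to conclude that $A$ has full row rank, and then appeal to the standard linear-algebra fact that such an $M$ is nonsingular whenever $A$ has full row rank and $\nabla^2_{\!xx}\LLL$ is positive definite on $\ker A$. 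The delicate point is that Theorem~\ref{thm:ssc} supplies positive definiteness on the \emph{critical cone} $\CCC$ rather than on $\ker A$; here strict complementarity is precisely what forces $\CCC = \ker A$, since no active inequality with a vanishing multiplier remains to enlarge the cone. This identification is the crux that lets the second-order sufficient condition feed into the nonsingularity argument.

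Having established nonsingularity, IFT yields a neighborhood of $r=0$ and a unique $C^1$ map $(x(r),\lambda(r),\mu_{\AAA}(r))$ solving $F=0$ with the prescribed value at $r=0$; I would extend $\mu(r)$ by zero on the inactive indices. It then remains to upgrade this solution of the KKT \emph{equations} to the stated conclusions by openness arguments. The strict inequalities $\mu_i(r)>0$ for $i\in\AAA$, $h_i(x(r),r)>0$ for $i\notin\AAA$, full row rank of $A$ (LICQ), and positive definiteness of $\nabla^2_{\!xx}\LLL$ on the tangent space (SSC) are each open conditions, and the data vary continuously in $r$, so all persist on a possibly smaller neighborhood of $0$. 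Persistence of strict complementarity and LICQ gives assertion~(iii); persistence of SSC combined with Theorem~\ref{thm:ssc} shows $x(r)$ is a strict local minimizer with the associated multipliers, giving~(ii); and the same reasoning specialized to $r=0$, together with uniqueness of the multipliers forced by LICQ, gives the isolatedness and uniqueness claimed in~(i).
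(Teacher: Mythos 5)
The paper itself contains no proof of this theorem---its ``proof'' is the single line ``See \cite{Fiacco1976}''---so the relevant comparison is with Fiacco's argument, and your proposal reconstructs exactly that classical route: freeze the active set $\AAA$ using strict complementarity, apply the implicit function theorem to the reduced KKT map $F$, and propagate LICQ, strict complementarity, and second-order sufficiency to nearby $r$ by continuity and openness. The core steps are sound, and you correctly identify the crux: under strict complementarity no active inequality carries a zero multiplier, so the critical cone $\CCC(x^*,\lambda^*,\mu^*)$ collapses to the subspace $\ker A$, the condition of Theorem~\ref{thm:ssc} becomes positive definiteness of $\nabla^2_{xx}\LLL$ on $\ker A$, and together with full row rank of $A$ (LICQ) this yields nonsingularity of $M$ by the standard lemma. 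The persistence claims are also true; the only one that deserves an extra line is positive definiteness on the \emph{varying} kernel $\ker A(r)$, which you can justify by noting that it is equivalent to $\nabla^2_{xx}\LLL + \rho A^\T A \succ 0$ for some fixed $\rho>0$, an open condition jointly in the data.

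There is, however, one genuine gap: the word ``isolated'' in conclusions (i) and (ii). Theorem~\ref{thm:ssc} delivers a \emph{strict} local minimizer, and strictness does not in general imply isolatedness (local minimizers may accumulate at a strict one). Under the present hypotheses isolatedness does hold, but it must be extracted from the uniqueness half of the implicit function theorem by a chain you never assemble: (a) every feasible point near $x^*$ has active set contained in $\AAA$ and inherits LICQ, so any local minimizer $\tilde x$ near $x^*$ is a KKT point with unique multipliers; (b) by LICQ these multipliers are continuous functions of $\tilde x$, hence converge to $(\lambda^*,\mu^*)$ as $\tilde x \to x^*$; (c) strict complementarity then forces the multiplier components on $\AAA$ to be positive for $\tilde x$ close enough, so by complementarity $h_i(\tilde x,0)=0$ for all $i\in\AAA$, meaning $(\tilde x,\tilde\lambda,\tilde\mu_{\AAA})$ is a zero of $F(\cdot,0)$ arbitrarily close to the base point; (d) local uniqueness from the implicit function theorem then forces $\tilde x=x^*$. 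The same chain run at parameter $r$ gives isolatedness of $x(r)$ in (ii). Your appeal to ``uniqueness of the multipliers forced by LICQ'' gestures at (a)--(b), but without (c)--(d) what you have actually proved is only strictness, not the isolatedness the theorem asserts.
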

\begin{proof}See \cite{Fiacco1976}.
\end{proof}

Numerically, the inequality constraints (\ref{eq:fd_op_param:ineq}) can be
treated with either an active set (AS) strategy or an interior point (IP)
method. In an AS strategy, the AS is updated in each iteration. Active
constraints are considered as equality constraints, inactive constraints are
omitted. By contrast, the objective function is modified with a barrier term
eliminating the inequality constraints in an IP framework. Therefore, we only
regard equality constraints in the following.

The necessary optimality (KKT) conditions (stationarity of the Lagrangian
function, feasibility, and complementarity) for problem (\ref{eq:gen_pop}) can
shortly be written as
\begin{equation}\label{eq:kkt_simple}
 K(x,\lambda,\mu,r) = 0.
\end{equation}
with e.g.\ for an active set method
\begin{equation*}
K(x,\lambda,\mu,r) \coloneqq
\left\{
\begin{aligned}
 &\nabla f(x,r) - \nabla_{\!x\,} g(x,r)\lambda - \nabla_{\!x\,} h(x,r)\mu\\
 &g(x,r)\\
 &h_i(x,r),\quad i\in\AAA(x),
\end{aligned}
\right.
\end{equation*}

We are interested in the derivative of the solution $(x^*(r),\lambda^*(r),\mu^*(r))$
of (\ref{eq:kkt_simple}) with respect to the
parameters. The implicit function theorem yields
\begin{equation*}
 \D_{(x,\lambda,\mu)} K(x^*,\lambda^*,\mu^*,r) \; \D_r (x,\lambda,\mu)
 = - \D_r K(x^*,\lambda^*,\mu^*,r).
\end{equation*}
The symbol $\D_x f$ denotes the partial derivative of $f$ w.r.t.\ $x$.
The same equation in matrix notation is
\begin{equation}\label{eq:KKT_sens_gen}
 \begin{bmatrix}
  \D_x K & \D_{\lambda}K & \D_{\mu}K
 \end{bmatrix}
 \begin{bmatrix}
  \D_r x \\ \D_r \lambda \\ \D_r \mu
 \end{bmatrix}
 = - \D_r K.
\end{equation}
The KKT matrix 
\begin{equation}
 \begin{bmatrix}
  \D_x K & \D_{\lambda}K & \D_{\mu}K
 \end{bmatrix}
\end{equation}
is nonsingular if LICQ and second order sufficient optimality conditions
\cite{Fletcher1981,Nocedal2006} are fulfilled.

\section{Numerical solution of the optimization problem (\ref{eq:local_op})} \label{s:opt}\label{s:GGN}
The optimization problem (\ref{eq:local_op}) is a constrained nonlinear least
squares (CNLLS) problem of the form
\begin{subequations}\label{eq:CNLLS}
 \begin{equation}
  \min_{x\in\RR^n}\;\tfrac{1}{2}\| F_1(x) \|_2^2
 \end{equation}
 \upshape{subject to}
 \begin{align}
   F_2(x) &= 0 
 \end{align}
\end{subequations}
where the functions $F_i: D\rightarrow\RR^{n_i},\ i=1,2$, are
supposed to be at least twice continuously differentiable in their open domain
$D\subset\RR^n$. The problem is solved
iteratively: $x_{k+1} = x_k + t_k d_k$, where $d_k\in\RR^n$ is
the increment and $t_k\in(0,1]$ the step length.

In order to solve this CNLLS problem, we use a generalized Gauss--Newton
(GGN) method as described in \cite{Bock1987,Stoer1971}, where the step direction
(increment) is computed as solution of the linearized optimization problem
\begin{subequations}\label{eq:CLLS}
 \begin{equation}\label{eq:CLLS:of}
  \min_{d\in\RR^n}\;\tfrac{1}{2}\| F_1(x_k) + J_1(x_k) d \|_2^2
 \end{equation}
 \upshape{subject to}
 \begin{align}
   F_2(x_k) + J_2(x_k)d &=         0  \label{eq:CLLS:ce}
 \end{align}
\end{subequations}
where $J_i$ is the Jacobian matrix of $F_i$ for $i=1,2$.

An AS strategy is used to treat inequality constraints. For globalization,
we employ a filter method \cite{Fletcher2002}. The filter is implemented as
in \cite{Waechter2005,Waechter2006}. The most important feature of a filter
is the fact, that a step is accepted if it improves the value of the objective
function or the constraint violation. The combination of a GGN
method with a filter method for globalization of convergence is new to our
knowledge.

To prevent the Maratos effect \cite{Nocedal2006}, we use a second order
correction (SOC) as in \cite{Waechter2005,Waechter2006}. The SOC increment
is computed as solution of the least squares problem, cf.\ \cite{Nocedal2006}:
\begin{subequations}
 \begin{equation*}
  \min_{\check{d}}\;\| \check{d} \|_2^2
 \end{equation*}
 \upshape{subject to}
 \begin{equation*}
  F_2(x+d) + J_2(x)\check{d}=0.
 \end{equation*}
\end{subequations}

If a trial point is not accepted by the filter after several reductions of
the step size, a feasibility restoration phase is necessary. This can be done
efficiently in context of a Gauss-Newton method. The aim of the feasibility
restoration phase is the computation of a feasible point that is ``near'' to the
last accepted iterate and acceptable to the updated filter.

The goal to find a feasible point that is ``near'' to the last iterate $x=x_k$
can be written in context of GGN methods as the following CNLLS
problem
\begin{subequations}\label{eq:resto-CNLLS}
 \begin{equation}
  \min_{\bar{x}}\;\tfrac{1}{2}\| \bar{x} - x \|_2^2
 \end{equation}
 \upshape{subject to}
 \begin{align}
   F_2(\bar{x}) &= 0 
 \end{align}
\end{subequations}
Problem (\ref{eq:resto-CNLLS}) can be solved iteratively with an increment
$\bar{d}_{kl}$ with the new iteration index $l$ as solution of a CLLS
optimization problem
\begin{subequations}\label{eq:resto-CLLS}
 \begin{equation}
  \min_{\bar{d}}\;\tfrac{1}{2}\| \bar{x}_k -x + \bar{d} \|_2^2
 \end{equation}
 \upshape{subject to}
 \begin{align}
   F_2(\bar{x}_k) + J_2(\bar{x}_k)\bar{d} &=         0 
 \end{align}
\end{subequations}
and initial value $\bar{x}^0 \coloneqq x$. The only difference between
(\ref{eq:resto-CLLS}) and problem (\ref{eq:CLLS}) is the objective function,
matrix factorizations, e.g.\ of $J_2$, can be reused.

\section{Continuation strategy} \label{s:continuation}
It is useful to follow the homotopy path of the zero of the KKT conditions
in dependence of the reaction progress variables to solve families of
optimization problems alike (\ref{eq:gen_op}) for different parameters.

In the following, we consider the finite dimensional parametric optimization
problem
\begin{subequations}\label{eq:gen_pop}
 \begin{equation}\label{eq:gen_pop:of}
  \min_{x\in\RR^n}\ f(x)
 \end{equation}
 subject to
 \begin{align}
  0 &= g(x) \label{eq:gen_pop:eq}\\
  0 &= x_{j(i)} - r^i,\quad i=1,\dots,n_r \label{eq:gen_pop:pv} 
 \end{align}
\end{subequations}
where the functions $f:D\rightarrow \RR$ and $g:D\rightarrow \RR^{n_2}$ are
$\CCC^2(D)$ in the open domain $D\subset\RR^n$.
The fixed values of the reaction progress
variables in (\ref{eq:gen_pop:pv}) are denoted by the parameter vector
$r\in\RR^{n_r}$, $r^i=z_{j(i)}^{t_{*}}$, $i=1,\dots,n_r$, $n_r<n-n_2$ with the
notation of
Equation~(\ref{eq:gen_op:pv}), where
$j:\{1,\dots,n_r\}\rightarrow \III_{\text{pv}}$, $i\mapsto j(i)$ is a bijective
map to the index set $\III_{\text{pv}}\subset\{1,\dots,n\}$ of the reaction
progress variables.


\subsection{Parameter sensitivities in the GGN method}
If Newton's method together with an active set method is used to find a
candidate solution of (\ref{eq:gen_pop}), i.e.\ the
computation of a root of $K$, the KKT matrix
$\D_{(x,\lambda,\mu)} K(x_k,\lambda_k,\mu_k)$ has to be computed in every
iteration $k$. This is different if a generalized Gauss--Newton method is
employed.

We return to the notation used in Section~\ref{s:opt}: The objective function
is written as $f(x)=\|F_1(x)\|_2^2$ with a sufficiently smooth function
$F_1:D\subset \RR^n\rightarrow\RR^{n_1}$. The constraints are given as
$F_2:D\subset \RR^n\rightarrow\RR^{\bar{n}_2}$,
$\bar{n}_2=n_2 + n_r + |\AAA(x)|$ with
\begin{equation*}
F_2(x) =
\left\{
\begin{aligned}
 &g(x)\\
 &x_{j(i)} - r^i,\quad i=1,\dots,n_r\\
 &x_i,\quad i\in\AAA(x)
\end{aligned}
\right.
\end{equation*}
and the active set $\AAA(x)$ in $x$. The Jacobian matrices of $F_1$ and $F_2$
are denoted as $J_1$ and $J_2$, respectively.

In every iteration, the equation system
\begin{equation} \label{eq:kkt-clls}
 \begin{bmatrix}
  J_1^\T J_1 & J_2^\T \\ J_2 & 0
 \end{bmatrix}
 \begin{bmatrix}
  d\\-\lambda
 \end{bmatrix}
 = -
 \begin{bmatrix}
  J_1^\T F_1 \\ F_2
 \end{bmatrix}
\end{equation}
is solved in the GGN method (where the argument $x_k$ is omitted and the vector
$\lambda$ is used for all equality and active inequality constraints);
that is the KKT system of the CLLS problem (\ref{eq:CLLS}), see
Section~\ref{s:opt}. By contrast,
\begin{equation} \label{eq:kkt-cnlls}
 \begin{bmatrix}
  \LLL_{xx} & -J_2^\T \\ J_2 & 0
 \end{bmatrix}
 \begin{bmatrix}
  \Delta x\\ \Delta\lambda
 \end{bmatrix}
 = -
 \begin{bmatrix}
  \nabla_x \LLL \\ F_2
 \end{bmatrix}
\end{equation}
is solved if Newton's method is applied to find a KKT point of the original
CNLLS problem (\ref{eq:CNLLS}). The derivative of the Lagrangian function of the
CNLLS problem is
$\nabla_{\!x\,} \LLL ^\T = F_1(x)^\T J_1(x) - \lambda^\T J_2(x)$. The difference
in the KKT matrices in Equations~(\ref{eq:kkt-cnlls}) and (\ref{eq:kkt-clls}) is
the difference in the Hessians of the different Lagrangian functions: In the GGN
method, $J_1^\T J_1$ is used; whereas Newton's method applied to the KKT
conditions of the original CNLLS problem (\ref{eq:CNLLS}) exploits
\begin{equation}\label{eq:Hess_Lagrange_CNLLS}
\begin{aligned}
 \LLL_{xx} &= \nabla_{\!xx\,}\tfrac{1}{2}\|F_1(x)\|_2^2 - \sum_{i=1}^{\bar{n}_2}\lambda_i \nabla_{\!xx\,} F_2^i(x) \\
           &= J_1^\T J_1 + \left(\D_x J_1^\T\right)F_1 - \sum_{i=1}^{\bar{n}_2}\lambda_i \nabla_{\!xx\,} F_2^i(x),
\end{aligned}
\end{equation}
where $F_2^i$ is the $i$-th component of $F_2$.

In our application in chemical kinetics, the constraints $F_2$ are given
via conservation relations. In this case, the second derivative
$\nabla_{\!xx\,} F_2^i$ of $F_2^i$
typically is zero. This might not be true, if an entry in $F_2$ is a nonlinear
equation in $x$ representing an energy conservation law, see also
\cite{Lebiedz2013}. 
This means, the effort for computing  $\LLL_{xx}$ instead of $J_1^\T J_1$ is
mainly the effort to compute the expression $\left(\D_x J_1^\T\right)F_1$.
This can be evaluated using automatic differentiation \cite{Griewank2000} as a
directional derivative of $J_1$ with respect to direction $F_1$.

\subsection{Euler predictor}\label{ss:derivative-pred}\label{s:step_size_strategy}
If the approximation of points on the SIM is used in simulations in
computational fluid dynamics, approximations of the tangent vectors of the
SIM in these points are needed, too, see e.g.\ \cite{Ren2007b}. These are
given via the parameter sensitivities
$D_{r^i} x^* = \tfrac{\d z^*(t_*)}{\d z_{j(i)}^{t_*}}$ at
the solution $x^*$ and $z^*$ of the optimization problems (\ref{eq:gen_pop}) and
(\ref{eq:gen_op}), respectively.
As these derivatives are available, we use the Euler prediction in a predictor
corrector scheme for initialization of the optimization algorithm (corrector)
to solve neighboring problems. In our case, this is the computation of a
solution with the optimization algorithm for various parameter values of the
reaction progress variables $r$.

The prediction can be used in a homotopy method with a step size strategy.
An effective step size strategy is published in \cite{Heijer1981} and
extensively discussed and modified in
\cite{Allgower1980,Allgower1990,Allgower1997}. We use the method as discussed
in \cite{Allgower1990}.
The aim of the step length strategy of den Heijer and Rheinboldt
\cite{Heijer1981} is to achieve a desired number of iterations for the corrector
step. The strategy allows for the computation of
a step length based on the contraction rate of the latest corrector iterations
and an error model for the corrector such that the desired number of iterations
is achieved.

We define a curve $c$ as a mapping from the parameter space to the space of the
primal and dual variables of (\ref{eq:gen_pop})
\begin{align*}
 c:\RR^{n_r} &\rightarrow \RR^{n+n_2} \\
           r &\mapsto c(r)\coloneqq 
           \begin{pmatrix}
           x^*(r)\\ \lambda^*(r)
           \end{pmatrix}.
\end{align*}
For each value of the parameter vector $r$, $c(r)$ is the solution
$(x^*(r),\lambda^*(r))$ of (\ref{eq:gen_pop}).

We denote the Euler predictor step
\begin{equation} \label{eq:Euler-prediction}
  c_{i+1}^0(h_i) = c_{i} + h_i(r_{i+1} - r_i)^\T \tfrac{\d}{\d r}c_{i}(r_i),
  \quad i=0,\dots
\end{equation}
The prediction $c_i^0$ is used as initialization of the corrector to compute the
solution of (\ref{eq:gen_pop}). The corrector iterations are denoted
\begin{equation*}
 c_i^{j+1}(h_i) = \mathrm{C}(c_i^j(h_i)),\quad j=0,\dots
\end{equation*}
with the corrector $\mathrm{C}$. It is assumed that the corrector
iterations converge to the solution
\begin{equation*}
 c_i^\infty (h_i) \coloneqq \lim_{j\rightarrow\infty} c_i^j(h_i) \in K^{-1}(0).
\end{equation*}

The sophisticated aspect in the work of den Heijer and Rheinboldt
\cite{Heijer1981} is the error model $\phi$. This
error model estimates the error of the iterates
\begin{equation*}
 \epsilon_j(h_i) = \| c_i^\infty (h_i) - c_i^{j}(h_i) \|_2
\end{equation*}
independently of $h$ via an expression of the form
\begin{equation*}
 \epsilon_{j+1}(h_i) \leqslant \phi(\epsilon_j(h_i)).
\end{equation*}
The formula for the error model depends on the contraction rate of the
corrector, see \cite[p.~53ff.]{Allgower1990}, where we use the error model for the
quadratically convergent Newton's method and for the linearly convergent GGN
method.

\paragraph{Linear step}
Optimization problem (\ref{eq:gen_pop}) has to be solved
many times for different values of the parameter $r$. Especially if the
approximation of points on a SIM is needed \emph{in situ}, e.g.\ in a CFD
simulation, it is necessary to compute the solution of neighboring optimization
problems fast.

If the values $r^{\textrm{new}}$ for the reaction progress variables, for which
a SIM approximation is needed, are near to the values $r^*$
\begin{equation} \label{eq:linear_step_tol}
 \| r^{\textrm{new}} - r^* \|_2 < \epsilon_{\mathrm{tol}}
\end{equation}
for which the optimization problem is already solved, it can save computing time
to use $z^{\lin}$ as approximation of the SIM. This is defined (in analogy to
(\ref{eq:Euler-prediction})) as
\begin{equation} \label{eq:linear_step}
 z^{\lin} \coloneqq z^*(t_*) + \left( r^{\textrm{new}} - r^*\right)^\T \frac{\d z^*(t_*)}{\d r}(r^*)
\end{equation}
where the notation is in accordance with the notation in (\ref{eq:iop}),
$r^i=z_{j(i)}^{t_{*}}$, $i=1,\dots,n_r$, $n_r=|\III_{\text{pv}}|$,
$j$ is the bijection, and $r^i$ is the $i$-th component of $r$.

\section{Numerical results} \label{s:results}
For numerical validation of our method, we use a test model for model reduction
purposes. The reaction mechanism is given in Table~\ref{t:ICE}. We use
thermodynamical data in form of coefficients of NASA polynomials we received
from J.~M.~Powers and A.~N.~Al-Khateeb, which they use in
\cite{Al-Khateeb2010,Al-Khateeb2009}.

The mechanism is published originally in \cite{Li2004}. The
simplified version shown in Table~\ref{t:ICE} is used by Ren et al.\ in
\cite{Ren2006a}. The mechanism consists of five reactive species and
inert nitrogen, where in comparison to a full hydrogen combustion mechanism the
species $\species{O}_2$, $\species{HO}_2$, and $\species{H}_2\species{O}_2$ are
removed. The species are involved in six Arrhenius type reactions, where three
combination/decomposition reactions require a third body for an effective
collision.
\begin{table} \centering
 \caption[Simplified mechanism]
  {\label{t:ICE}Simplified mechanism as used in \cite{Ren2006a}. Collision
  efficiencies $\species{M}$:
  $\alpha_{\species{H}} = 1.0 , \alpha_{\species{H_2}} = 2.5, \alpha_{\species{OH}} = 1.0$,
  $\alpha_{\species{O}} = 1.0 , \alpha_{\species{H_2O}} = 12.0, \alpha_{\species{N_2}} = 1.0$;
  \cite{Ren2006a}.}
 \begin{tabular}{@{}lclrrr@{}}
 \toprule
 \multicolumn{3}{@{}l}{Reaction} & $A$ / ($\textrm{cm}, \textrm{mol}, \textrm{s}$) & $b$ & $E_\textrm{a}$ / $\si{\kilo\joule\per\mole}$ \\
 \cmidrule(r){1-3} \cmidrule(lr){4-4} \cmidrule(lr){5-5} \cmidrule(l){6-6}
 $\species{O} + \species{H_2} $ & $\rightleftharpoons$ & $\species{H} + \species{OH}$ &  $5.08\times 10^{04}$ & $2.7$ & $26.317$  \\
 $\species{H_2} + \species{OH}$ & $\rightleftharpoons$ & $\species{H_2O} + \species{H}$ & $2.16\times 10^{08}$ & $1.5$ & $14.351$ \\
 $\species{O}$ + $\species{H_2O}$ & $\rightleftharpoons$ & $\species{2}\,\species{OH}$ & $2.97\times 10^{06}$ & $2.0$ & $56.066$ \\
 $\species{H_2} + \species{M}$ & $\rightleftharpoons$ & $\species{2}\,\species{H} + \species{M}$ & $4.58\times 10^{19}$ & $-1.4$ & $436.726$ \\
 $\species{O} + \species{H} + \species{M}$ & $\rightleftharpoons$ & $\species{OH} + \species{M}$ & $4.71\times 10^{18}$ & $-1.0$ & $0.000$ \\
 $\species{H} + \species{OH} + \species{M}$ & $\rightleftharpoons$ & $\species{H_2O} + \species{M}$ & $3.80\times 10^{22}$ & $-2.0$ & $0.000$ \\
 \bottomrule
\end{tabular}
\end{table}
The reaction system is considered under isothermal and isobaric conditions at a
temperature of $T=\SI{3000}{\kelvin}$ and a pressure of
$p=\SI{101325}{\pascal}$. Hence, the state of the system is sufficiently
described by the specific moles of the chemical species
$z_i$, $i=1,\dots,n_{\textrm{spec}}$ in $\si{\mole\per\kilogram}$.

Conservation relations for the elemental mass in this model are given in terms
of amount of substance as \cite{Ren2006a}
\begin{equation}
\begin{aligned} \label{eq:ice-conserve}
 n_\species{H}+2\,n_{\species{H}_2}+n_\species{OH}+2\,n_{\species{H}_2\species{O}} &= \SI{1.25e-3}{\mole} \\
 n_\species{OH}+n_\species{O}+n_{\species{H}_2\species{O}} &= \SI{4.15e-4}{\mole} \\
 2\,n_{\species{N}_2} &= \SI{6.64e-3}{\mole}.
\end{aligned}
\end{equation}
The total mass in the system can be computed with the values in
Equation~(\ref{eq:ice-conserve}) and has a value of $m=\SI{1.01e-4}{\kilogram}$.

\subsection{One-dimensional SIM approximation} \label{ss:res_1d}
Results of an approximation of a one-dimensional SIM are shown in
Figure~\ref{f:result1} and \ref{f:result2}.
We use $z_{\species{H_2O}}$ as reaction progress variable and vary its value.
The values of all remaining species at the solution of the different
optimization problems are plotted versus $z_{\species{H_2O}}$ in form of
trajectories through the solution points $z^*(t_*)$ which are shown as x marks.
The computation is done with the GGN method as described in Section~\ref{s:opt}.

We use values near the equilibrium state as initial values in the optimization
algorithm as we assume this point near a slow manifold. We set the values of
the reaction progress variable near its initial value;
see Table~\ref{t:init}. This allows for a fast computation of a solution of the first
optimization problem with $z_{\species{H}_2\species{O}}^{t_*}=\SI{3}{\mole\per\kilogram}$.

\begin{table}[htbp] \centering
 \caption[Simplified hydrogen combustion: initial values and solution]
  {\label{t:init}Initial value (unscaled) for the algorithm and a solution of the
   optimization problem (\ref{eq:iop}) as solved for the results depicted
   in Figure~\ref{f:result1} to reduce the simplified hydrogen
   combustion model with $z_{\species{H}_2\species{O}}^{t_*}=\SI{3}{\mole\per\kilogram}$.}
 \begin{tabular}{@{}lll@{}}
 \toprule
   Variable &  Initial value $z^0(t_*)$ & Numerical solution $z^*(t_*)$ \\
  \cmidrule(r){1-1}  \cmidrule(lr){2-2} \cmidrule(l){3-3}
   $z_{\species{O}}$                  & $\SI{0.34546441}{}$ & $\SI{0.34563763}{}$\\
   $z_{\species{H}_2}$                & $\SI{2.0279732}{}$  & $\SI{2.0281615}{}$\\
   $z_{\species{H}}$                  & $\SI{1.5195639}{}$  & $\SI{1.5193606}{}$\\
   $z_{\species{OH}}$                 & $\SI{0.76454959}{}$ & $\SI{0.76437637}{}$\\
   $z_{\species{H}_2\species{O}}$     & $\SI{3.0000000}{}$  & $\SI{3.0000000}{}$\\
   $z_{\species{N}_2}$                & $\SI{32.905130}{}$  & $\SI{32.905130}{}$\\
 \bottomrule
\end{tabular}
\end{table}

All computations are done on an Intel$^\circledR$
Core$^{\textrm{TM}}$ i5-2410M CPU with 2.30\,GHz, operating system
openSUSE 12.2 (x86\_64) including the Linux kernel 3.4.11 and GCC 4.7.1.
We do not use a step size strategy in the predictor corrector scheme. To compute
the 17 points shown in Figure~\ref{f:result1}, 74 iterations are necessary,
which take in sum $\SI{0.02}{\second}$.
\begin{figure}[htb]
 \centering
 \includegraphics[width=0.75\textwidth]{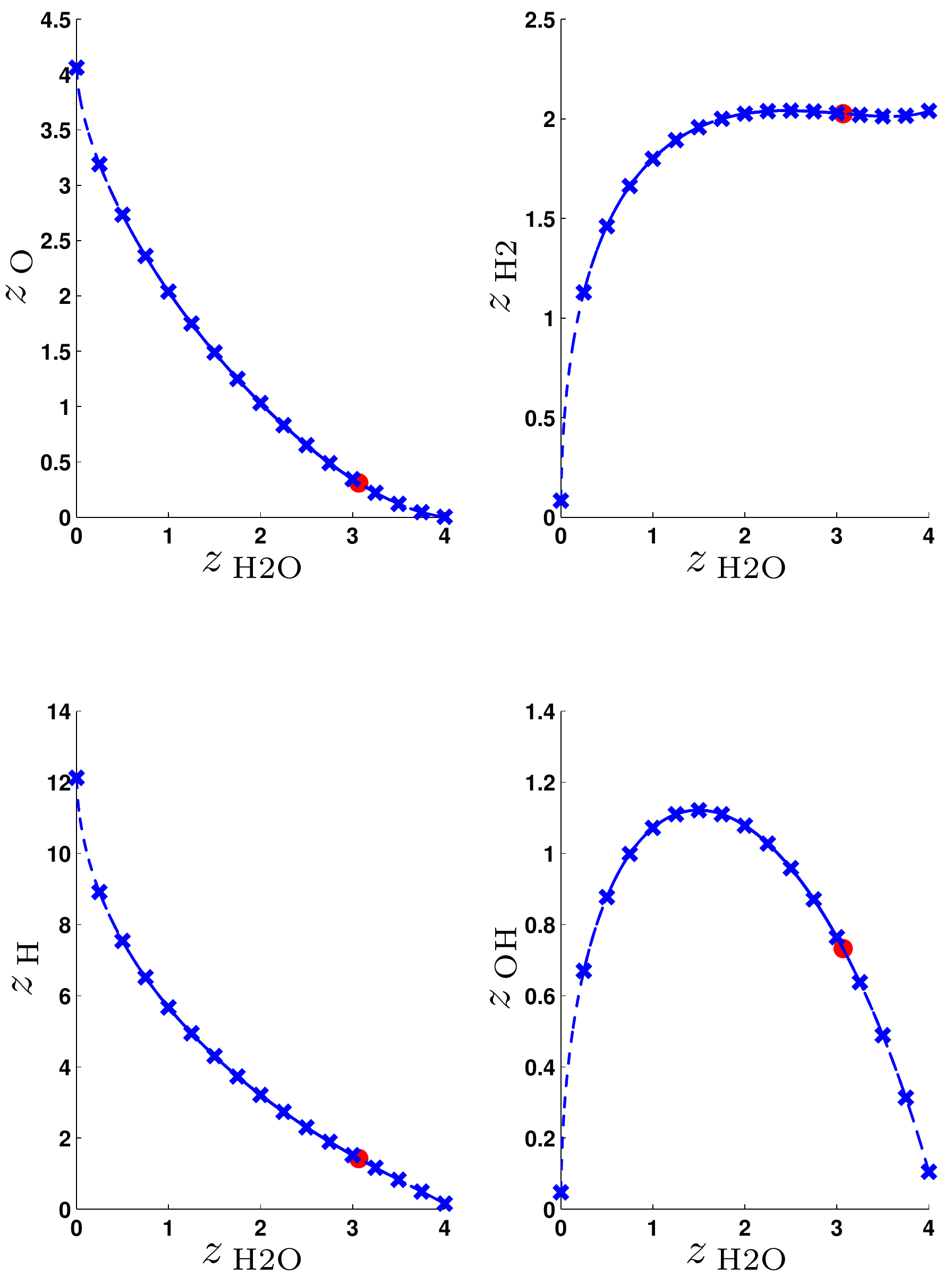}
 \caption{\label{f:result1}Numerical solutions of the
 optimization problem (\ref{eq:local_op}) to approximate a one-dimensional
 manifold in the state space of the six species model for hydrogen combustion.
 The full dot represents the equilibrium. The x marks depict solutions
 $z^*(t_*)$ of (\ref{eq:local_op}) for different values of the reaction progress
 variables $z_{\species{H_2O}}$ at $t_*$. Trajectories through these points
 (curves in the figures) are also shown.}
\end{figure}

Figure~\ref{f:result2} is presented to illustrate the linear step approximation.
For the computation of the three solutions (as $\epsilon_{\mathrm{tol}}=1.1$ is
chosen) of the optimization problem with different values for $z_{\species{H_2O}}$, 15
iterations are needed (in sum), which is slightly more effort per optimization
problem than in the case, where the results are illustrated in
Figure~\ref{f:result1}, where the distance between the different values
for $z_{\species{H_2O}}^{t_*}$ is only $0.25$. It can be seen that
the result of the linear approximation can lead to large deviations from a
smooth, invariant manifold.
\begin{figure}[htb]
 \centering
 \includegraphics[width=0.75\textwidth]{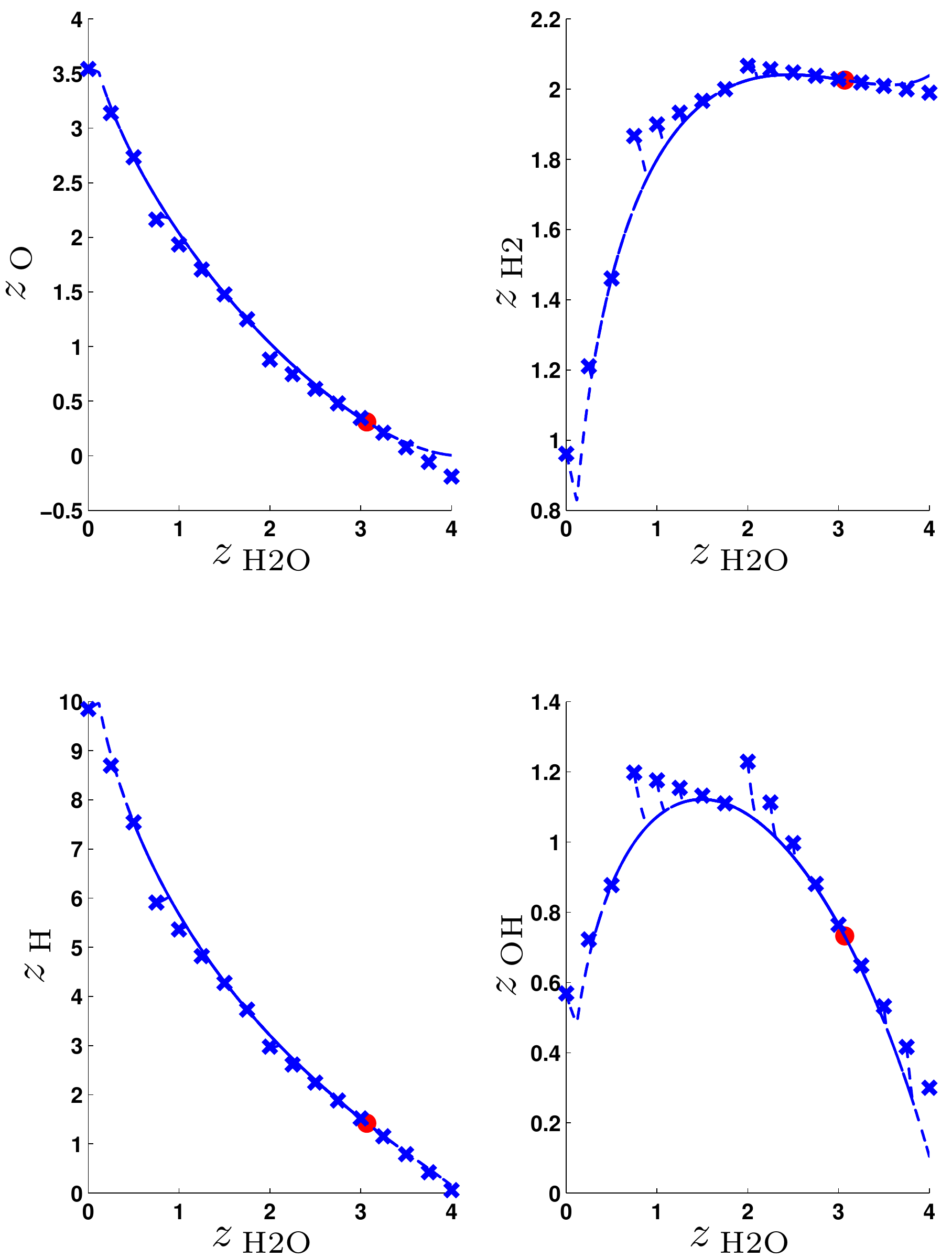}
 \caption{\label{f:result2}Illustration of solutions of the same problem as
 solved for the results shown in Figure~\ref{f:result1}, but with a linear
 step tolerance of $\epsilon_{\mathrm{tol}}=1.1$, see
 (\ref{eq:linear_step_tol}).}
\end{figure}

\paragraph{Warm start with step size strategy}
We want to demonstrate that a step size strategy for the warm start of the
algorithm to solve
neighboring optimization problems can have a benefit. So we consider the
optimization problem (\ref{eq:iop}) with $t_*=t_\f$, $t_\f-t_0=\SI{e-7}{\second}$
to be solved for the nominal parameter
$z_{\species{H}_2\species{O}}^{t_*}=\SI{3}{\mole\per\kilogram}$
with the shooting approach and \texttt{Ipopt} \cite{Waechter2006}. As neighboring
problem we consider the parameter
$z_{\species{H}_2\species{O}}^{t_*}=\SI{0.5}{\mole\per\kilogram}$. Such a large
change in the parameter can occur e.g.\ if the presented method for model
reduction is used \emph{in situ} in a CFD simulation and grid refinements
are performed in different regions of the spatial domain.

We solve the optimization problem first with $z_{\species{H}_2\species{O}}^{t_*}=\SI{3}{\mole\per\kilogram}$
and second with $z_{\species{H}_2\species{O}}^{t_*}=\SI{0.5}{\mole\per\kilogram}$
with a full step method in the predictor corrector scheme and apply the Euler
prediction.

The computations are done on an Intel$^\circledR$
Core$^{\textrm{TM}}$ i5-2410M CPU with 2.30\,GHz, operating system
openSUSE 12.2 (x86\_64) including the Linux kernel 3.4.11 and GCC 4.7.1.
Six iterations in \texttt{Ipopt} \cite{Waechter2006} have
to be preformed to solve the nominal optimization problem and nine iterations to
solve the second optimization problem. In sum, 15 iterations are necessary which
take $\SI{0.82}{\second}$ in sum.

If we initialize the step size strategy, see Section~\ref{s:step_size_strategy},
with initial step size $h_{\textrm{init}}=\tfrac{1}{2.5}$ and the desired number
of iterations $\tilde{k}=10$, we need one intermediate step in the predictor
corrector scheme described in
Section~\ref{ss:derivative-pred}. This is to solve the optimization problem with
the parameter $z_{\species{H}_2\species{O}}^{t_*}=\SI{2}{\mole\per\kilogram}$.
In this case we need in sum $6+5+8=19$ iterations that take only $\SI{0.70}{\second}$.
The difference in the computation time arises as the point for initialization of
the algorithm to solve the second optimization problem in the full step method
is not near the solution such that the KKT matrix is ill-conditioned. The inertia
correction in \texttt{Ipopt}, see \cite{Waechter2006},
is activated, and 19 line search iterations are performed in sum. In case of an
activated step size strategy, the initial value for the algorithm to solve the
optimization problem in case of a warm start is near the solution of the
optimization problem such that no inertia correction and also 19 line search
iterations (one per Newton iteration) are necessary in the presented example.

For an efficient tracking of the SIM for different values of the reaction
progress variables, it is important to stay close to the solution in neighboring
optimization problems. Therefore, a step size strategy is beneficial.

\subsection{Two-dimensional SIM approximation} \label{ss:res_2d}
We choose $z_1=z_{\species{H}_2}$ and $z_2=z_{\species{O}}$ as reaction progress
variables. Numerical solutions of (\ref{eq:iop}) for
the reduction of the simplified model for hydrogen combustion are shown in
Figure~\ref{f:ice2d}.
\begin{figure}[htbp]
 \centering
 \includegraphics[width=0.75\textwidth]{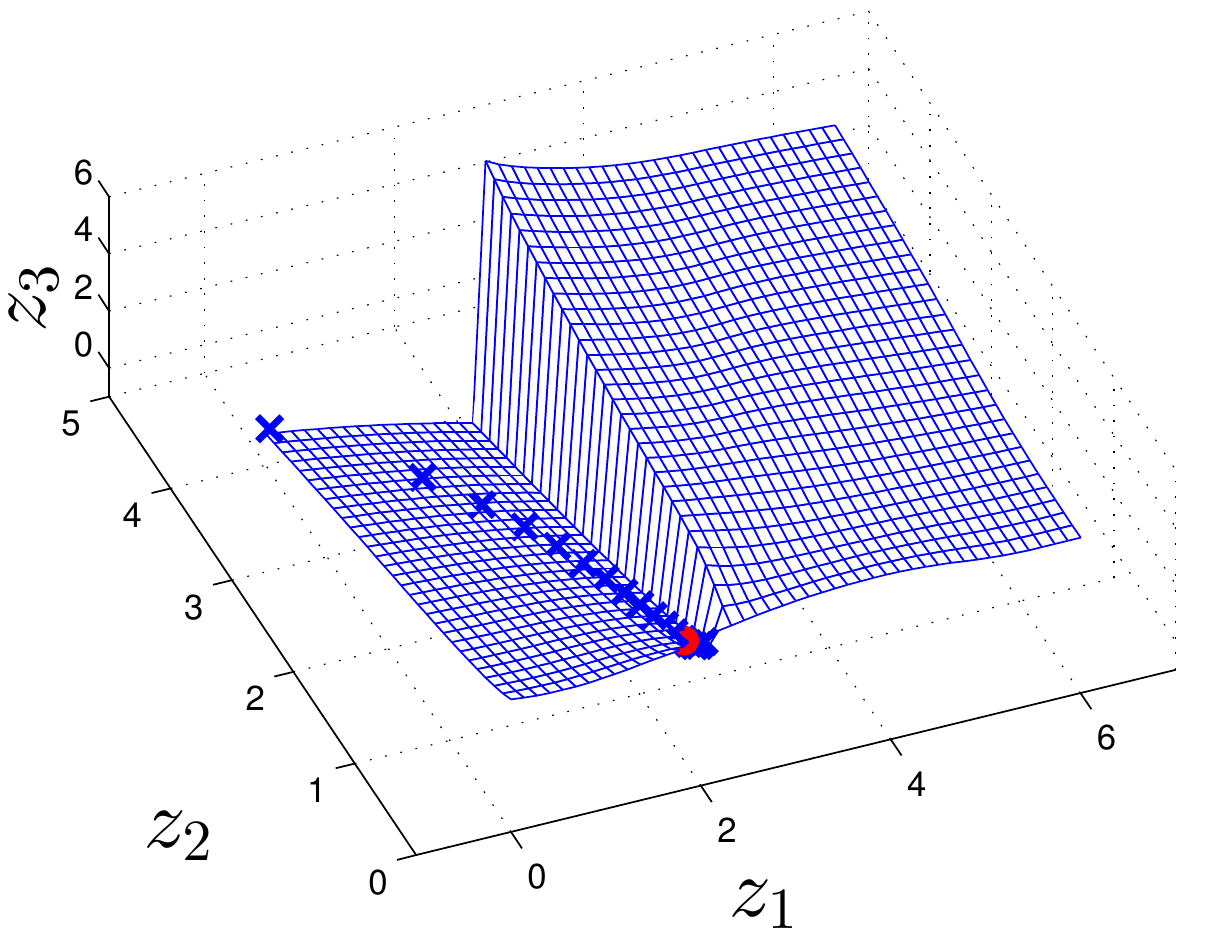}
 \caption
 {\label{f:ice2d}Visualization of numerical solutions of the optimization problem
 (\ref{eq:iop}) with $t_*=t_\f$, $t_\f-t_0=\SI{e-7}{\second}$ to reduce the
 simplified hydrogen combustion model with the reaction progress variables $z_1$
 and $z_2$. The solution points $z^*(t_*)$ with
 $z_3=z_{\species{H}_2\species{O}}$ for different values of
 $(z_1^{t_*},z_2^{t_*})$ are shown as a mesh. The results shown in
 Figure~\ref{f:result1} are again plotted as x marks as well as the equilibrium
 is shown as full dot.}
\end{figure}

Local solutions of the optimization problem (\ref{eq:iop}) approximate different
two-di\-men\-sional SIM for different values of the reaction progress variables.
The two two-dimensional SIM come close to each other. Such occurrences can lead
to severe numerical problems in the optimization algorithms as well as in the
continuation algorithms as there are regions where the KKT matrix might be
singular at least in the range of machine precision.

\subsubsection{Optimization landscapes}
We want to further illustrate this situation via optimization landscapes,
i.e.\ graphical representations of the objective function versus the (free)
optimization variables.

\paragraph{One reaction progress variable}
In Figure~\ref{f:ice-1d-log-land}, the value of
(the objective function of the optimization problem (\ref{eq:local_op}))
$\Phi=\| J_{S}(z)\;S(z) \|_2^2$ is plotted versus the two degrees of
freedom in (\ref{eq:local_op}) represented by $z_1$ and $z_2$
for one exemplary value of the one reaction progress variable
$z_3^{t_*}=z_{\species{H}_2\species{O}}^{t_*}=\SI{1}{\mole\per\kilogram}$ for the
simplified hydrogen combustion model.
\begin{figure}[htbp]
 \centering
 \includegraphics[width=0.75\textwidth]{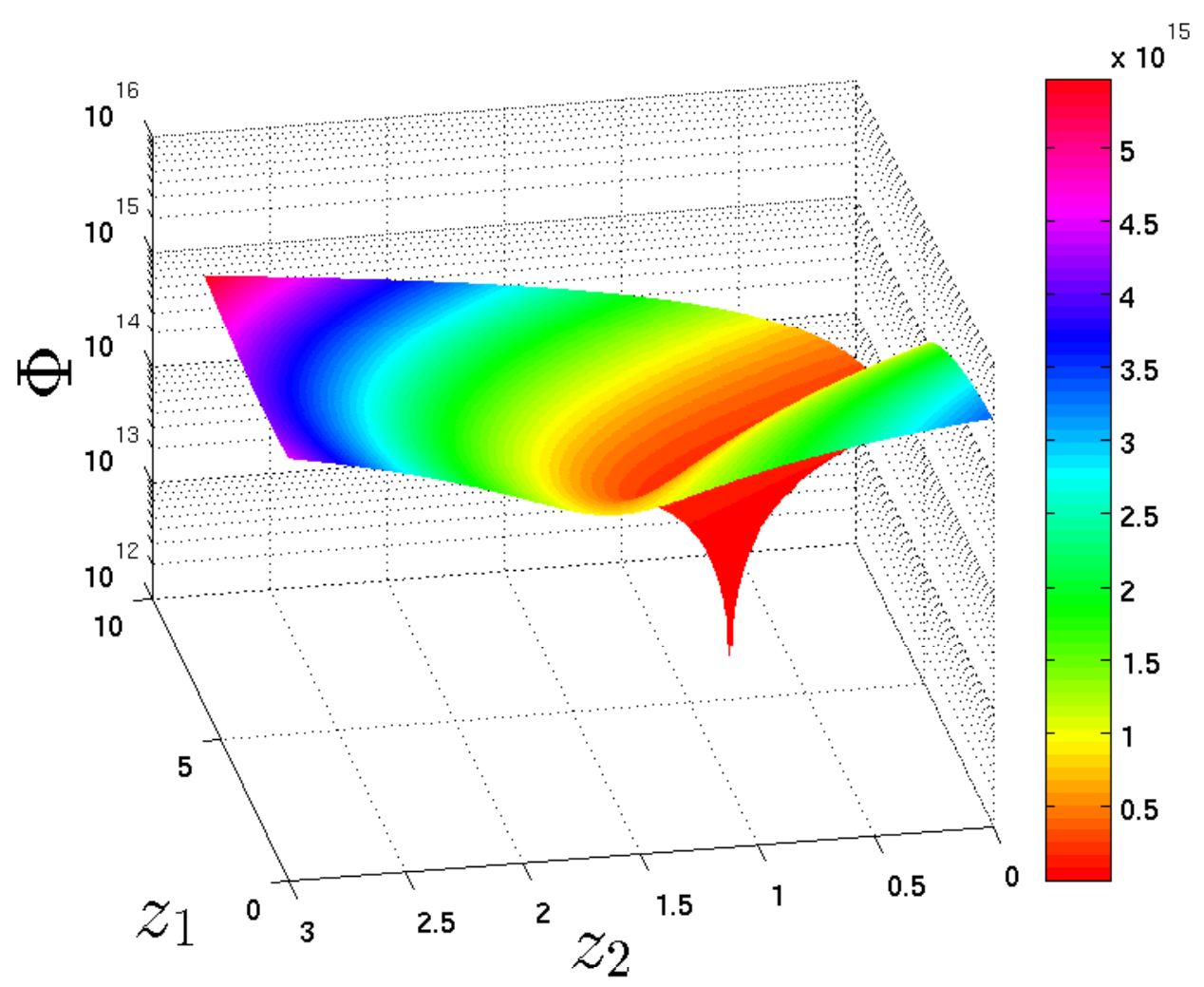}
 \caption
 {\label{f:ice-1d-log-land}Visualization of $\Phi=\| J_{S}(z)\;S(z) \|_2^2$
 in dependence of $z_1$ and $z_2$ to illustrate the solution of the optimization problem (\ref{eq:local_op})
 with $z_3^{t_*}=z_{\species{H}_2\species{O}}^{t_*}=\SI{1}{\mole\per\kilogram}$
 for the reduction of the simplified hydrogen combustion model.}
\end{figure}

The $\Phi$-axis has logarithmic scale. A single local solution of the optimization
problem (\ref{eq:local_op}) for the reduction of the simplified hydrogen
combustion model can be seen.

\paragraph{Two reaction progress variables}
To compute an optimization landscape in case of two
reaction progress variables, we fix $z_2^{t_*}=z_{\species{O}}^{t_*}=\SI{0.3}{\mole\per\kilogram}$.
We regard the value of $\Phi=\| J_{S}(z)\;S(z) \|_2^2$
(the objective function of the optimization problem (\ref{eq:local_op}))
in dependence of $z_3$ for fixed $z_1$.

The results are shown in Figure~\ref{f:ice-2d-log-land}. It can be seen that
there are two distinct local minima of $\Phi=\| J_{S}(z)\;S(z) \|_2^2$ for a
fixed value of e.g.\ $z_1=\SI{2}{\mole\per\kilogram}$: There is no unique local
solution of the optimization problem (\ref{eq:local_op}) for the reduction of
the simplified hydrogen combustion model with the reaction progress variables 
$z_2^{t_*}=z_{\species{O}}^{t_*}=\SI{0.3}{\mole\per\kilogram}$ and
$z_1^{t_*}=z_{\species{H}_2}^{t_*}=\SI{2}{\mole\per\kilogram}$.
One solution is near the value
$z_3^{\prime}=z_{\species{H}_2\species{O}}^{\prime}=\SI{3.1020}{\mole\per\kilogram}$
and the other one near $z_3^{\prime\prime}=\SI{5.2977}{\mole\per\kilogram}$.
\begin{figure}[htbp]
 \centering
 \includegraphics[width=0.75\textwidth]{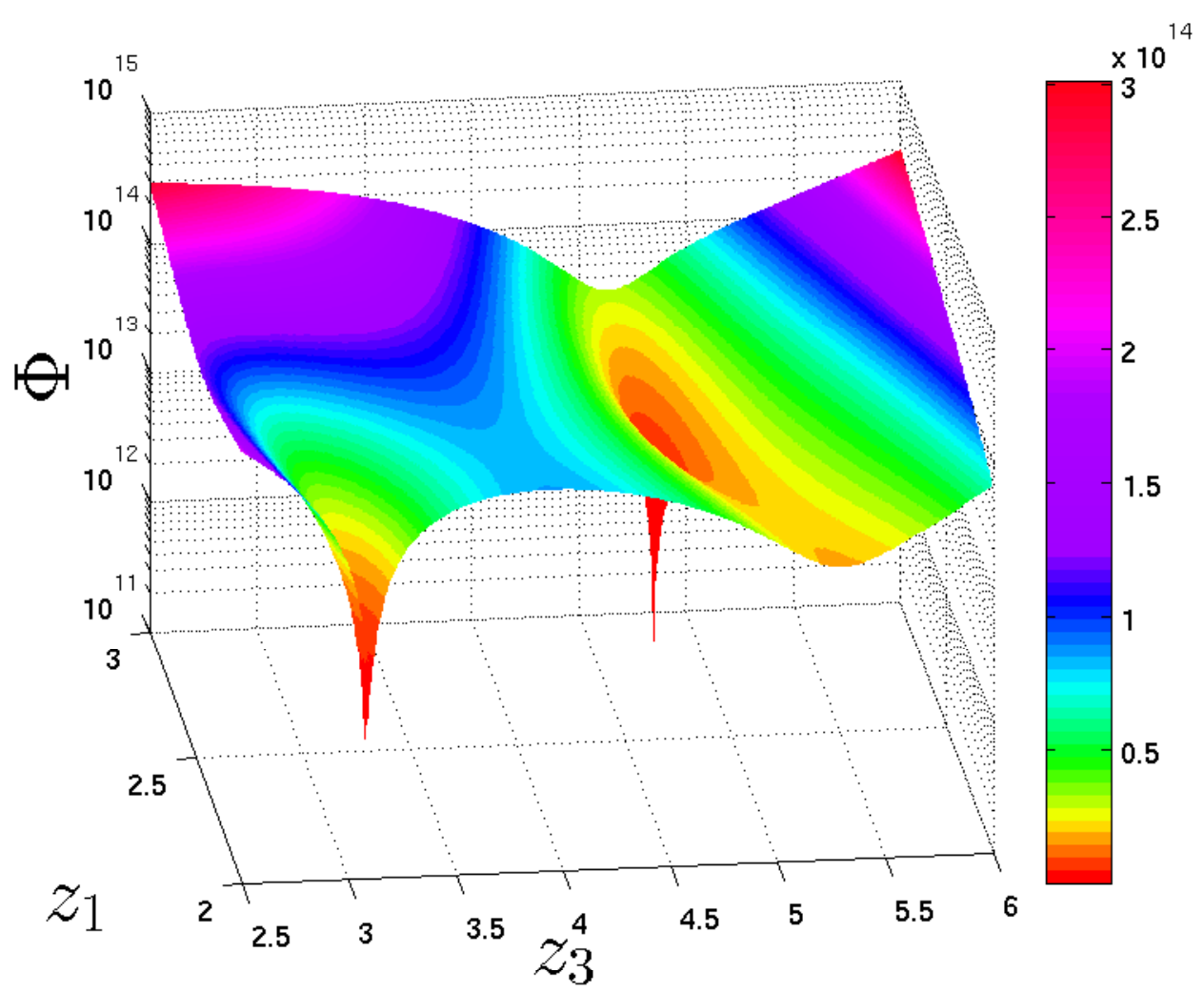}
 \caption[Six species $\species{H}_2$ comb.: opt.\ landscape, 2-dim.\ SIM, side view]
 {\label{f:ice-2d-log-land}Visualization of $\Phi=\| J_{S}(z)\;S(z) \|_2^2$
 for the simplified hydrogen combustion model
 in dependence of $z_1$ and $z_3$ for a fixed value of
 $z_2=\SI{0.3}{\mole\per\kilogram}$. The scale for $\Phi$ is logarithmic.
 Consider the value of $\Phi$ in dependence of $z_3$ for a fixed value of
 $z_1$, e.g.\ for $z_1=\SI{2}{\mole\per\kilogram}$.}
\end{figure}

In this case the predictor corrector scheme helps to follow the desired local
optimal solution in dependence of the reaction progress variables and not to
switch to another curve $c$ of local solutions. This is only possible as long as
the KKT matrix is not ``too ill-conditioned''.

\subsubsection{Performance test}
We use the specific moles of $\species{H_2O}$ and $\species{H_2}$ for the
parametrization of a two-dimensional SIM approximation in a performance test.

We consider a test situation of a two-dimensional grid of 108 points
$(z_{\species{H_2O}}^{t_*}$, $z_{\species{H_2}}^{t_*})\in[0.001,0.5,1,1.5,\dots,5,5.5]\times[0.001,0.5,1,\dots,3.5,4]$,
where points which
violate mass conservation in combination with the positivity constraints are
ignored such that 80 points remain.

Solutions of the optimization problem (\ref{eq:local_op}) computed with the
GGN method with Euler prediction for initialization of the algorithm to solve
neighboring problems are shown in Figure~\ref{f:result3} for
illustration.
\begin{figure}[htb]
 \centering
 \includegraphics[width=0.75\textwidth]{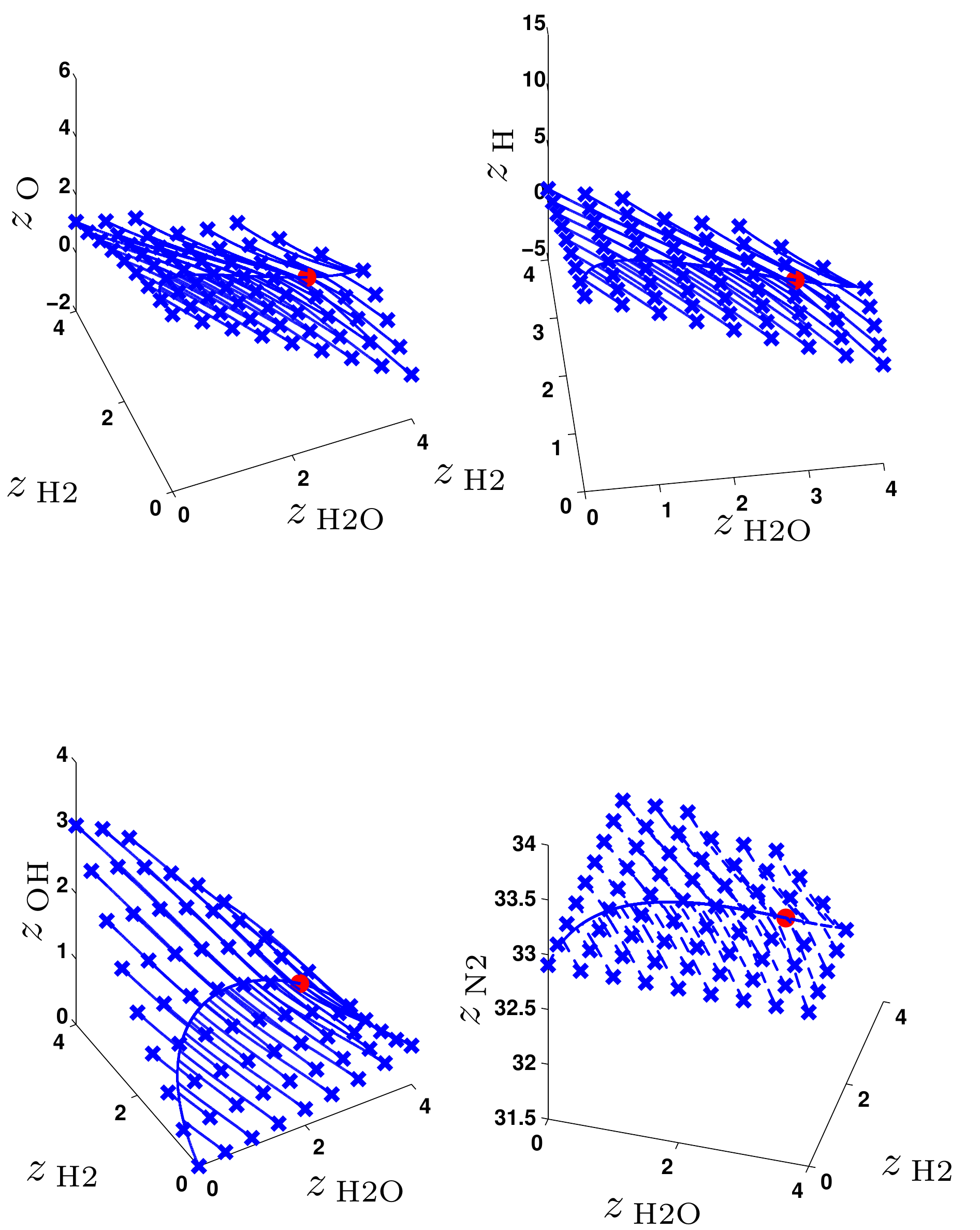}
 \caption[Six species $\species{H}_2$ comb.: optimization problem (\ref{eq:local_op}), 2-dim.\ SIM]
 {\label{f:result3}Illustration of an approximation of a two-dimensional SIM
 in the phase space of the simplified model for hydrogen combustion. The
 optimization problem (\ref{eq:local_op}) is solved with the GGN method.
 The absolute tolerance for all $d_i$ is $\SI{e-10}{}$. The relative
 tolerance is $\SI{e-9}{}$. The values of the optimization
 variables at the solution are plotted versus the given values of the
 reaction progress variables shown as x marks. The trajectories started with
 the solution $z^*(t_*)$ as initial values
 converge toward the equilibrium shown as full dot.}
\end{figure}

In Table~\ref{t:comparison}, we compare the performance of the algorithm using
the different implemented solution methods.
We use the generalized Gauss--Newton method as described in Section~\ref{s:GGN}
for the solution of (\ref{eq:local_op}). We solve the same problem with the
interior point algorithm \texttt{Ipopt} \cite{Waechter2006}.
Third, we apply a shooting approach for the
semi-infinite optimization problem (\ref{eq:gen_op}) with $t_*=t_\f$. The NLP
problem (after solving the ODE with the BDF integrator developed by D.~Skanda in
\cite{Skanda2012}) is solved with
\texttt{Ipopt} \cite{Waechter2006}. As a fourth
alternative, we use a simple Gauss--Radau collocation with linear
polynomials (backward Euler) for (\ref{eq:gen_op})
with $t_*=t_\f$. The resulting high-dimensional NLP problem is also solved with
\texttt{Ipopt} \cite{Waechter2006} in the latter case.
For the optimization problem (\ref{eq:iop}), an integration horizon
of $t_\f-t_0=\SI{e-8}{s}$ is used.
\begin{table}[htbp] \centering
 \caption
  {\label{t:comparison}Comparison of the performance of the various algorithms
  for the reduction of the simplified model for hydrogen combustion with two
  reaction progress variables and full step predictor corrector method for the
  initialization of neighboring problems.}
 \begin{tabular}{@{}llrrrr@{}}
 \toprule
  Method & Prediction & \# Iter.\ w/o fail & Time & Fail & Time w/o fail \\
  \cmidrule(r){1-1} \cmidrule(lr){2-2} \cmidrule(lr){3-3} \cmidrule(lr){4-4} \cmidrule(lr){5-5} \cmidrule(l){6-6}
  GGN         & Constant & 806 & \SI{  0.43}{s} & 11 & \SI{0.18}{s} \\
              & Euler    & 731 & \SI{  0.35}{s} & 11 & \SI{0.17}{s} \\
  IP for (\ref{eq:local_op}) & Constant & 670 & \SI{  1.09}{s} & 0  & \SI{1.09}{s} \\
              & Euler    & 591 & \SI{  0.85}{s} & 0  & \SI{0.85}{s} \\
  Shooting    & Constant & 335 & \SI{ 44.15}{s} & 0  & \SI{44.15}{s} \\
              & Euler    & 258 & \SI{ 39.36}{s} & 0  & \SI{39.36}{s} \\
  Collocation & Constant & 315 & \SI{ 95.61}{s} & 0  & \SI{ 95.61}{s} \\
              & Euler    & 234 & \SI{ 51.13}{s} & 0  & \SI{ 51.13}{s} \\
 \bottomrule
\end{tabular}
\end{table}

An initialization of neighboring problems is done without step size control,
as we evaluate the benefit of the Euler prediction here. The computations are
done with the same computer configuration as in the example for a warm start
with step size control in Section~\ref{ss:res_1d}.

It can be seen
that the gain in the computation time achieved by the Euler prediction
in case of the collocation method, where we use 100
collocation points, is the largest with
about 46.5\%. This is reasonable because of the strong dependence of the
collocation method on the initialization on all collocation points in the time
interval $[t_0,t_\f]$.

The benefit of the Euler prediction is about 10.8\% if the shooting approach is
used to solve (\ref{eq:iop}). In case of the GGN method, the benefit is only
5.6\% if we do not regard failures. The eleven failures only
occur in the region, where $z_{\species{H_2O}}^{t_*}$ is larger than the
equilibrium value $z_{\species{H_2O}}^{\textrm{eq}}$. These can
not be overcome neither with the step size control for the continuation
method nor with a larger tolerance for convergence in the GGN method. It can
be seen that the algorithm for the solution of (\ref{eq:local_op}) with the GGN
method is much faster than the algorithm for the solution of
(\ref{eq:iop}). The computation of one approximation of a point on the SIM
(without regarding failures) takes about
$\SI{0.17}{\second}/(80-11) \approx \SI{2.5}{\milli\second}$. This is pretty
fast and might be used in an online (\emph{in situ}) SIM computation during CFD
simulations.

\section{Conclusion} \label{s:conclusion}
In this article, we present a strategy for an efficient solution of parametric
optimization problems that arise for a method for model reduction that is
raised in \cite{Lebiedz2004c,Lebiedz2010,Reinhardt2008}.

Two methods are tested to solve the semi-infinite optimization problem
(\ref{eq:iop}) for model reduction: collocation and shooting approaches.
The resulting nonlinear programming problem is solved with a state-of-the-art
open source interior point algorithm \cite{Waechter2006}. It turns out that all
variants for the solution of the semi-infinite optimization problem are too slow
for an \emph{in situ} application of the model reduction method in e.g.\ a
computational fluid dynamics simulation.

A finite optimization problem in form of a constrained nonlinear least squares
problem can be formulated instead. This can be solved efficiently
with a generalized Gauss--Newton method. A filter approach is used for
globalization of convergence. Second order correction iterations prevent the
Maratos effect. The problem that has to be solved in the feasibility restoration
phase of the filter algorithm can also be formulated as a least squares problem
such that matrix factorizations can be reused.

Parameter sensitivities of the optimization problem are used in a homotopy
method for the solution of neighboring problems. The reaction progress variables
for parametrization of the slow manifold
are considered as parameters in the optimization problem. The problem has to be
solved several times for different values of the parameters.
An Euler prediction of the solution of neighboring problems is employed based on
the parameter sensitivities, which are computed to obtain an approximation of
the tangent space of the slow manifold.
This linear prediction can be used directly within a certain tolerance as an
approximation of a point on the slow manifold. It can also be used in
combination with a step size strategy.
The step size is computed according to \cite{Heijer1981} in dependence of the
contraction of the corrector method -- the solution method for the nonlinear
programming problem -- and the iterations needed to solve a previous
optimization problem.

We study the presented methods for a simplified model for hydrogen combustion.
We find that a solution of the optimization problem for approximation of
points on a slow manifold can be computed in short time for the presented
example. Furthermore it can be seen that there might exist several distinct
local solutions of the optimization problem. In such cases, the predictor
corrector scheme helps to follow one local minimum in dependence of the chosen
parametrization of the slow manifold.

Such a solution strategy could also be used for variations in the model
parameters as e.g.\ the mixture fraction, the internal energy, and others.


\end{document}